\documentclass[11pt]{article}


\usepackage{amsmath,amsthm,amscd,amsfonts,amssymb}
\usepackage{graphicx,slashbox}
\usepackage{subfigure}


\newtheorem{thm}{Theorem}[section]

\newtheorem{remark}[thm]{Remark}

\evensidemargin=0pt
\oddsidemargin=0pt

\textheight=22cm
\textwidth=16cm


\def\a{\alpha}

\def\LHI{{_a\mathcal{I}_t^\a}}
\def\RHI{{_t\mathcal{I}_b^\a}}
\def\LHD{{_a\mathcal{D}_t^\a}}
\def\RHD{{_t\mathcal{D}_b^\a}}
\def\LHIHz{{_1\mathcal{I}_t^{0.5}}}
\def\LHDHz{{_1\mathcal{D}_t^{0.5}}}


\newenvironment{keywords}{\begin{center}
\begin{minipage}[c]{13.4cm} {\bf Keywords:}} {\end{minipage}
\end{center}}

\newenvironment{msc}{\begin{center}
\begin{minipage}[c]{13.4cm} {\bf MSC 2010:}} {\end{minipage}
\end{center}}


\begin{document}

\title{Expansion formulas in terms of integer-order derivatives\\
for the Hadamard fractional integral and derivative\footnote{Part
of the first author's Ph.D., which is carried out at the University
of Aveiro under the \emph{Doctoral Program in Mathematics and Applications} (PDMA)
of Universities of Aveiro and Minho, supported by FCT fellowship SFRH/BD/33761/2009.\newline
Submitted 25-Jul-2011; revised 28-Nov-2011; accepted 02-Dec-2011; 
for publication in \emph{Numerical Functional Analysis and Optimization}.}}

\author{Shakoor Pooseh\\
\texttt{spooseh@ua.pt}
\and Ricardo Almeida\\
\texttt{ricardo.almeida@ua.pt}
\and Delfim F. M. Torres\\
\texttt{delfim@ua.pt}}

\date{Center for Research and Development in Mathematics and Applications\\
Department of Mathematics, University of Aveiro, 3810-193 Aveiro, Portugal}

\maketitle


\begin{abstract}
We obtain series expansion formulas for the Hadamard fractional integral
and fractional derivative of a smooth function. When considering finite sums only,
an upper bound for the error is given. Numerical simulations show
the efficiency of the approximation method.
\end{abstract}

\begin{msc}
26A33, 33F05.
\end{msc}

\begin{keywords}
Fractional Calculus, Hadamard fractional integrals,
Hadamard fractional derivatives, numerical approximations.
\end{keywords}


\section{Introduction}

In general terms, Fractional Calculus allows to define integrals and derivatives
of arbitrary real or complex order, and can be seen as a generalization of ordinary calculus.
A fractional derivative of order $\a>0$, when $\a$ is integer, coincides with
the classical derivative of order $n\in\mathbb N$, while
a fractional integral is an \textit{n-folded} integral. Although there exist
in the literature a large number of definitions for fractional operators
(integrals and derivatives), the Riemann--Liouville and Caputo are the most common
for fractional derivatives, and for fractional integrals the usual one
is the Riemann--Liouville definition. In our work we consider the Hadamard
fractional integral and fractional derivative. Although the definitions
go back to the works of Hadamard in 1892 \cite{Hadamard}, this type of operators
are not yet well studied and much exists to be done.

As well-known, for most problems involving fractional operators,
such as fractional differential equations or fractional control problems,
one cannot provide methods to compute the exact solutions analytically.
Therefore, numerical methods need to be employed.
Typically, problems are addressed by looking to the fractional operators
as special types of integrals and, using partitions of the domains, writing them
as finite sums, with some error for the final result.
Our approach is distinct from this, in the sense that we seek expansion formulas
for the Hadamard fractional operators with integer-order derivatives. In this way
we can rewrite the original problem, which depends on fractional operators, as a new one
that involves integer derivatives only and then successfully apply the standard methods
to obtain the desired solution. Also, in some cases, it can be easier
to find the fractional derivative or integral using those expansions, instead of applying
the direct definitions. In contrast with \cite{Atan2}, where an expansion for the
Riemann--Liouville fractional derivative is given, our expansions for the Hadamard
fractional derivative and integral do not omit the first derivative, allowing one
to obtain considerable better accuracy in computation.

The paper is organized in the following way. In Section~\ref{secpre} we review some concepts
of fractional calculus. Decomposition formulas for the left and right Hadamard fractional
integrals are given in Section~\ref{secDecomInt}, together with approximation formulas
and error estimations. Following the same approach, similar formulas are obtained
for the left and right Hadamard fractional derivatives in Section~\ref{secDecomDeriv}.
In Section~\ref{secExamples} we test the efficiency of such approximations with some examples,
comparing the analytical/exact solution with the numerical approximation.


\section{Preliminaries}
\label{secpre}

In this section we review some necessary definitions for our present work,
namely the Hadamard fractional integral and derivative. For more on fractional calculus,
we refer the interested reader to \cite{Kilbas,Miller,Podlubny,Samko}. For related work
on Hadamard fractional operators, see \cite{Butzer,Butzer2,Katugampola,Kilbas2,Kilbas3,Qian}.

Let $a,b$ be two reals with $0<a<b$ and $x:[a,b]\to\mathbb R$ be an integrable function.
The left and right Hadamard fractional integrals of order $\a>0$ are defined by
$$
\LHI x(t)=\frac{1}{\Gamma(\alpha)}\int_a^t
\left(\ln\frac{t}{\tau}\right)^{\alpha-1}\frac{x(\tau)}{\tau}d\tau,
\quad t\in ]a,b[,
$$
and
$$
\RHI x(t)=\frac{1}{\Gamma(\alpha)}
\int_t^b \left(\ln\frac{\tau}{t}\right)^{\a-1}\frac{x(\tau)}{\tau}d\tau,
\quad t\in ]a,b[,
$$
respectively. These integrals were introduced by Hadamard
in \cite{Hadamard} in the special case $a=0$.
When $\a=m$ is an integer, these fractional integrals
are \textit{m-folded} integrals
(see, \textrm{e.g.}, \cite{Butzer,Katugampola}):
$$
{_a\mathcal{I}_t^m} x(t)=\int_a^t \frac{d\tau_1}{\tau_1}\int_a^{\tau_1}
\frac{d\tau_2}{\tau_2}\cdots \int_a^{\tau_{m-1}} \frac{x(\tau_m)}{\tau_m}d\tau_m
$$
and
$$
{_t\mathcal{I}_b^m}x(t)=\int_t^b \frac{d\tau_1}{\tau_1}
\int_{\tau_1}^b \frac{d\tau_2}{\tau_2}
\cdots \int_{\tau_{m-1}}^b \frac{x(\tau_m)}{\tau_m}d\tau_m.
$$

For fractional derivatives, we also consider left and right operators.
For $\a>0$, the left and right Hadamard fractional derivatives
of order $\a$ are defined by
$$
\LHD x(t)=\left(t\frac{d}{dt}\right)^n\frac{1}{\Gamma(n-\alpha)}
\int_a^t \left(\ln\frac{t}{\tau}\right)^{n-\alpha-1}\frac{x(\tau)}{\tau}d\tau,
\quad t\in ]a,b[,
$$
and
$$
\RHD x(t)=\left(-t\frac{d}{dt}\right)^n\frac{1}{\Gamma(n-\alpha)}
\int_t^b \left(\ln\frac{\tau}{t}\right)^{n-\a-1}\frac{x(\tau)}{\tau}d\tau,
\quad t\in ]a,b[,
$$
respectively, with $n=[\a]+1$. When $\a=m$ is an integer,
we have (\textrm{cf.} \cite{Kilbas})
$$
{_a\mathcal{D}_t^m} x(t)=\left(t\frac{d}{dt}\right)^m x(t)
\mbox{ and } {_t\mathcal{D}_b^m}x(t)=\left(-t\frac{d}{dt}\right)^m x(t).
$$

Hadamard's fractional integrals and derivatives can be seen as inverse operations
of each other (see Property~2.28 and Theorem~2.3 of \cite{Kilbas}).

When $\a\in(0,1)$ and $x\in AC[a,b]$, with $AC[a,b]$ denoting
the space of all absolutely continuous functions $x:[a,b]\to\mathbb R$,
the Hadamard fractional derivatives may be expressed by
\begin{equation}
\label{LHDformula}
\LHD x(t)=\frac{x(a)}{\Gamma(1-\a)}\left(\ln\frac{t}{a}\right)^{-\alpha}
+\frac{1}{\Gamma(1-\alpha)}\int_a^t \left(\ln\frac{t}{\tau}\right)^{-\alpha}
\dot{x}(\tau)d\tau
\end{equation}
and
\begin{equation}
\label{RHDformula}
\RHD x(t)=\frac{x(b)}{\Gamma(1-\a)}\left(\ln\frac{b}{t}\right)^{-\alpha}
-\frac{1}{\Gamma(1-\alpha)}
\int_t^b \left(\ln\frac{\tau}{t}\right)^{-\a}\dot{x}(\tau)d\tau.
\end{equation}
For an arbitrary $\a>0$ we refer the reader to \cite[Theorem~3.2]{Kilbas2}.
If a function $x$ admits derivatives of any order,
then expansion formulas for the Hadamard fractional integrals and derivatives
of $x$, in terms of its integer-order derivatives,
are given in \cite[Theorem~17]{Butzer2}:
$$
{_0\mathcal{I}_t^\a}x(t)=\sum_{k=0}^\infty S(-\a,k)t^k x^{(k)}(t)
$$
and
$$
{_0\mathcal{D}_t^\a}x(t)=\sum_{k=0}^\infty S(\a,k)t^k x^{(k)}(t),
$$
where
$$
S(\a,k)=\frac{1}{k!}\sum_{j=1}^k(-1)^{k-j} {k \choose j} j^{\a}
$$
is the Stirling function.


\section{An expansion formula for the Hadamard fractional integral}
\label{secDecomInt}

In this section we consider the class of differentiable functions
up to order $n+1$, $x\in C^{n+1}[a,b]$, and deduce expansion formulas
for the Hadamard fractional integrals in terms of $x^{(i)}(\cdot)$,
for $i\in \{0,\ldots,n\}$. Before presenting the result in its full extension,
we briefly explain the techniques involved for the particular case $n=2$.
To that purpose, let $x\in C^3[a,b]$.
Integrating by parts three times, we obtain
$$
\begin{array}{ll}
\LHI x(t)&=\displaystyle-\frac{1}{\Gamma(\a)}
\int_a^t-\frac{1}{\tau} \left(\ln\frac{t}{\tau}\right)^{\a-1}x(\tau)d\tau\\
&=\displaystyle\frac{1}{\Gamma(\a+1)} \left(\ln\frac{t}{a}\right)^{\a}x(a)
-\frac{1}{\Gamma(\a+1)}\int_a^t-\frac{1}{\tau}
\left(\ln\frac{t}{\tau}\right)^{\a}\tau \dot{x}(\tau)d\tau\\
&=\displaystyle\frac{1}{\Gamma(\a+1)} \left(\ln\frac{t}{a}\right)^{\a}x(a)
+\frac{1}{\Gamma(\a+2)} \left(\ln\frac{t}{a}\right)^{\a+1}a\dot{x}(a)\\
&\quad\displaystyle -\frac{1}{\Gamma(\a+2)}\int_a^t-\frac{1}{\tau}
\left(\ln\frac{t}{\tau}\right)^{\a+1}(\tau \dot{x}(\tau)+\tau^2\ddot{x}(\tau))d\tau\\
&=\displaystyle\frac{1}{\Gamma(\a+1)} \left(\ln\frac{t}{a}\right)^{\a}x(a)
+\frac{1}{\Gamma(\a+2)} \left(\ln\frac{t}{a}\right)^{\a+1}a\dot{x}(a)\\
&\displaystyle\quad+\frac{1}{\Gamma(\a+3)}
\left(\ln\frac{t}{a}\right)^{\a+2}(a\dot{x}(a) + a^2\ddot{x}(a))\\
&\quad+\displaystyle\frac{1}{\Gamma(\a+3)}
\int_a^t \left(\ln\frac{t}{\tau}\right)^{\a+2}(\dot{x}(\tau)
+3\tau \ddot{x}(\tau)+\tau^2\dddot{x}(\tau))d\tau.
\end{array}
$$
On the other hand, using the binomial formula, we have
$$
\begin{array}{ll}
\displaystyle\left(\ln\frac{t}{\tau}\right)^{\a+2}
&=\displaystyle\left(\ln\frac{t}{a}\right)^{\a+2}\left(
1-\frac{\ln\frac{\tau}{a}}{\ln\frac{t}{a}}\right)^{\a+2}\\
&=\displaystyle\left(\ln\frac{t}{a}\right)^{\a+2}
\sum_{p=0}^\infty\frac{\Gamma(p-\a-2)}{\Gamma(-\a-2)p!}
\cdot \frac{\left(\ln\frac{\tau}{a}\right)^p}{\left(\ln\frac{t}{a}\right)^p}.
\end{array}
$$
This series converges since $\tau\in[a,t]$ and $\a+2>0$.
Combining these formulas, we get
\begin{multline*}
\LHI x(t)=\frac{1}{\Gamma(\a+1)} \left(\ln\frac{t}{a}\right)^{\a}x(a)
+\frac{1}{\Gamma(\a+2)} \left(\ln\frac{t}{a}\right)^{\a+1}a\dot{x}(a)
+\frac{1}{\Gamma(\a+3)} \left(\ln\frac{t}{a}\right)^{\a+2}(a\dot{x}(a)
+a^2\ddot{x}(a))\\
+\frac{1}{\Gamma(\a+3)}\left(\ln\frac{t}{a}\right)^{\a+2}
\sum_{p=0}^\infty\frac{\Gamma(p-\a-2)}{\Gamma(-\a-2)p!\left(\ln\frac{t}{a}\right)^p}
\int_a^t \left(\ln\frac{\tau}{a}\right)^p \left(\dot{x}(\tau)+3\tau \ddot{x}(\tau)
+\tau^2\dddot{x}(\tau)\right)d\tau.
\end{multline*}
Now, split the series into the two cases $p=0$ and $p=1\ldots\infty$,
and integrate by parts the second one. We obtain
$$
\begin{array}{ll}
\LHI x(t)&=\displaystyle\frac{1}{\Gamma(\a+1)}
\left(\ln\frac{t}{a}\right)^{\a}x(a)
+\frac{1}{\Gamma(\a+2)} \left(\ln\frac{t}{a}\right)^{\a+1}a\dot{x}(a)\\
&\displaystyle\quad +\frac{1}{\Gamma(\a+3)}
\left(\ln\frac{t}{a}\right)^{\a+2}(t\dot{x}(t)
+t^2\ddot{x}(t))\left[1+\sum_{p=1}^\infty\frac{\Gamma(p-\a-2)}{\Gamma(-\a-2)p!} \right]\\
&\displaystyle\quad +\frac{1}{\Gamma(\a+2)}\left(\ln\frac{t}{a}\right)^{\a+2}
\sum_{p=1}^\infty\frac{\Gamma(p-\a-2)}{\Gamma(-\a-1)(p-1)!\left(\ln\frac{t}{a}\right)^p}
\int_a^t \left(\ln\frac{\tau}{a}\right)^{p-1} (\dot{x}(\tau)+\tau \ddot{x}(\tau))d\tau.
\end{array}
$$
Repeating this procedure two more times, we obtain the following:
$$
\begin{array}{ll}
\LHI x(t)&=\displaystyle \frac{1}{\Gamma(\a+1)}
\left(\ln\frac{t}{a}\right)^{\a}x(t)\left[1
+\sum_{p=3}^\infty\frac{\Gamma(p-\a-2)}{\Gamma(-\a)(p-2)!} \right]\\
&\displaystyle\quad +\frac{1}{\Gamma(\a+2)}
\left(\ln\frac{t}{a}\right)^{\a+1}t\dot{x}(t)\left[1
+\sum_{p=2}^\infty\frac{\Gamma(p-\a-2)}{\Gamma(-\a-1)(p-1)!} \right]\\
&\displaystyle\quad +\frac{1}{\Gamma(\a+3)} \left(\ln\frac{t}{a}\right)^{\a+2}(t\dot{x}(t)
+t^2\ddot{x}(t))\left[1+\sum_{p=1}^\infty\frac{\Gamma(p-\a-2)}{\Gamma(-\a-2)p!} \right]\\
&\displaystyle\quad +\frac{1}{\Gamma(\a)}  \left(\ln\frac{t}{a}\right)^{\a+2}
\sum_{p=3}^\infty\frac{\Gamma(p-\a-2)}{\Gamma(-\a+1)(p-3)!\left(\ln\frac{t}{a}\right)^p}
\int_a^t \left(\ln\frac{\tau}{a}\right)^{p-3} \frac{x(\tau)}{\tau}d\tau,
\end{array}
$$
or, in a more concise way,
$$
\begin{array}{ll}
\LHI x(t)&=\displaystyle A_0(\alpha)\left(\ln\frac{t}{a}\right)^{\a} x(t)
+ A_1(\alpha)\left(\ln\frac{t}{a}\right)^{\a+1}t\dot{x}(t)\\
&\quad\displaystyle + A_2(\alpha) \left(\ln\frac{t}{a}\right)^{\a+2}(t\dot{x}(t)
+t^2\ddot{x}(t))+ \sum_{p=3}^\infty B(\a,p)\left(\ln\frac{t}{a}\right)^{\a+2-p}V_p(t)
\end{array}
$$
with
$$
\begin{array}{ll}
A_0(\a)&=\displaystyle\frac{1}{\Gamma(\a+1)}\left[1
+\sum_{p=3}^\infty\frac{\Gamma(p-\a-2)}{\Gamma(-\a)(p-2)!}\right],\\
A_1(\a)&=\displaystyle\frac{1}{\Gamma(\a+2)}\left[1
+\sum_{p=2}^\infty\frac{\Gamma(p-\a-2)}{\Gamma(-\a-1)(p-1)!}\right],\\
A_2(\a)&=\displaystyle\frac{1}{\Gamma(\a+3)}\left[1
+\sum_{p=1}^\infty\frac{\Gamma(p-\a-2)}{\Gamma(-\a-2)p!}\right],
\end{array}
$$
\begin{equation}
\label{Case:B3}
B(\a,p)=\displaystyle\frac{\Gamma(p-\a-2)}{\Gamma(\a)\Gamma(1-\a)(p-2)!}
\end{equation}
and
\begin{equation}
\label{Case:Vp3}
V_p(t)=\int_a^t (p-2)\left(\ln\frac{\tau}{a}\right)^{p-3}\frac{x(\tau)}{\tau}d\tau,
\end{equation}
where we assume the series and the integral $V_p$ to be convergent.

\begin{remark}
When useful, namely on fractional differential equations problems,
we can define $V_p$ as in \eqref{Case:Vp3}
by the the solution of the system
$$
\left\{
\begin{array}{l}
\dot{V_p}(t)=(p-2)\left(
\ln\frac{t}{a}\right)^{p-3}\frac{x(t)}{t}\\
V_p(a)=0
\end{array}
\right.
$$
for all $p=3,4,\ldots$
\end{remark}

We now discuss the convergence of the series involved
in the definitions of $A_i(\a)$, for $i \in \{0,1,2\}$.
Simply observe that
$$
\sum_{p=3-i}^\infty\frac{\Gamma(p-\a-2)}{\Gamma(-\a-i)(p-2+i)!}
={_1F_0} (-\a-i,1)-1,
$$
and ${_1F_0}(a,x)$ converges absolutely when $|x|=1$
if $a<0$ (\cite[Theorem~2.1.2]{Andrews}).

For numerical purposes, only finite sums are considered,
and thus the Hadamard left fractional integral is approximated by the decomposition
\begin{equation}
\begin{array}{ll}
\label{Case:n=3}
\LHI x(t)&\approx\displaystyle A_0(\alpha,N)\left(\ln\frac{t}{a}\right)^{\a} x(t)
+ A_1(\alpha,N)\left(\ln\frac{t}{a}\right)^{\a+1}t\dot{x}(t)\\
&\quad\displaystyle + A_2(\alpha,N) \left(\ln\frac{t}{a}\right)^{\a
+2}(t\dot{x}(t)+t^2\ddot{x}(t))+ \sum_{p=3}^N B(\a,p)\left(
\ln\frac{t}{a}\right)^{\a+2-p}V_p(t)
\end{array}
\end{equation}
with
$$
\begin{array}{ll}
A_0(\a,N)&=\displaystyle\frac{1}{\Gamma(\a+1)}\left[1
+\sum_{p=3}^N\frac{\Gamma(p-\a-2)}{\Gamma(-\a)(p-2)!}\right],\\
A_1(\a,N)&=\displaystyle\frac{1}{\Gamma(\a+2)}\left[1
+\sum_{p=2}^N\frac{\Gamma(p-\a-2)}{\Gamma(-\a-1)(p-1)!}\right],\\
A_2(\a,N)&=\displaystyle\frac{1}{\Gamma(\a+3)}\left[1
+\sum_{p=1}^N\frac{\Gamma(p-\a-2)}{\Gamma(-\a-2)p!}\right],
\end{array}
$$
$B(\a,p)$ and $V_p(t)$ as in \eqref{Case:B3}--\eqref{Case:Vp3}, and $N\geq3$.
We proceed with an estimation for the error on such approximation.
We have proven before that
$$
\LHI x(t)=\frac{1}{\Gamma(\a+1)} \left(\ln\frac{t}{a}\right)^{\a}x(a)
+\frac{1}{\Gamma(\a+2)} \left(\ln\frac{t}{a}\right)^{\a+1}a\dot{x}(a)
+\frac{1}{\Gamma(\a+3)} \left(\ln\frac{t}{a}\right)^{\a+2}(a\dot{x}(a)
+a^2\ddot{x}(a))
$$
$$
+\frac{1}{\Gamma(\a+3)}  \left(\ln\frac{t}{a}\right)^{\a+2} \int_a^t
\sum_{p=0}^\infty\frac{\Gamma(p-\a-2)}{\Gamma(-\a-2)p!}\frac{\left(
\ln\frac{\tau}{a}\right)^p}{\left(\ln\frac{t}{a}\right)^p}(\dot{x}(\tau)
+3\tau \ddot{x}(\tau)+\tau^2\dddot{x}(\tau))d\tau.
$$
When we consider finite sums up to order $N$, the error is given by
$$
\left| E_{tr}(t)\right|=\left|\frac{1}{\Gamma(\a+3)}
\left(\ln\frac{t}{a}\right)^{\a+2}
\int_a^t R_N(\tau) (\dot{x}(\tau)+3\tau \ddot{x}(\tau)
+\tau^2\dddot{x}(\tau))d\tau\right|
$$
with
$$
R_N(\tau)=\sum_{p=N+1}^\infty\frac{\Gamma(p-\a-2)}{\Gamma(-\a-2)p!}\frac{
\left(\ln\frac{\tau}{a}\right)^p}{\left(\ln\frac{t}{a}\right)^p}.
$$
Since $\tau\in[a,t]$, we have
$$
\begin{array}{ll}
|R_N(\tau)| &  \displaystyle \leq \sum_{p=N+1}^\infty\left|
\binom{\a+2}{p} \right|
\leq  \sum_{p=N+1}^\infty \frac{e^{(\a+2)^2+\a+2}}{p^{\a+3}}\\
&\displaystyle\leq \int_{N}^\infty \frac{e^{(\a+2)^2+\a+2}}{p^{\a+3}}dp
=\frac{e^{(\a+2)^2+\a+2}}{(\a+2)N^{\a+2}}.
\end{array}
$$
Therefore,
$$
\left| E_{tr}(t)\right|\leq \frac{1}{\Gamma(\a+3)}
\left(\ln\frac{t}{a}\right)^{\a+2} \frac{e^{(\a+2)^2
+\a+2}}{(\a+2)N^{\a+2}}\left[(t-a)L_1(t)+3(t-a)^2L_2(t)+(t-a)^3L_3(t)\right],
$$
where
$$
L_i(t)=\max_{\tau\in[a,t]}|x^{(i)}(\tau)|,
\quad i \in \{1,2,3\}.
$$

Following similar arguments as done for $n=2$,
we can prove the general case with an expansion
up to the derivative of order $n$. First,
we introduce a notation. Given $k\in\mathbb N \cup\{0\}$,
we define the sequences $x_{k,0}(t)$ and $x_{k,1}(t)$
recursively by the formulas
$$
x_{0,0}(t)=x(t) \mbox{ and } x_{k+1,0}(t)
=t\frac{d}{dt} x_{k,0}(t),
\mbox{ for } k\in\mathbb N\cup\{0\},
$$
and
$$
x_{0,1}(t)=\dot{x}(t) \mbox{ and } x_{k+1,1}(t)
=\frac{d}{dt} (t x_{k,1}(t)),
\mbox{ for } k\in\mathbb N\cup\{0\}.
$$

\begin{thm}
Let $n\in\mathbb N$, $0<a<b$ and $x:[a,b]\to\mathbb R$
be a function of class $C^{n+1}$. Then,
$$
\LHI x(t)=\sum_{i=0}^{n}A_i(\alpha)\left(\ln\frac{t}{a}\right)^{\a+i} x_{i,0}(t)
+\sum_{p=n+1}^\infty B(\a,p)\left(\ln\frac{t}{a}\right)^{\a+n-p}V_p(t)
$$
with
$$
\begin{array}{ll}
A_i(\a)&=\displaystyle\frac{1}{\Gamma(\a+i+1)}\left[1
+\sum_{p=n-i+1}^\infty\frac{\Gamma(p-\a-n)}{\Gamma(-\a-i)(p-n+i)!}\right],\\
B(\a,p)&=\displaystyle\frac{\Gamma(p-\a-n)}{\Gamma(\a)\Gamma(1-\a)(p-n)!},\\
V_p(t)&=\int_a^t (p-n)\left(\ln\frac{\tau}{a}\right)^{p-n-1}\frac{x(\tau)}{\tau}d\tau.
\end{array}
$$
Moreover, if we consider the approximation
$$
\LHI x(t)\approx\sum_{i=0}^{n}A_i(\alpha,N)\left(\ln\frac{t}{a}\right)^{\a+i}
x_{i,0}(t)+\sum_{p=n+1}^N B(\a,p)\left(\ln\frac{t}{a}\right)^{\a+n-p}V_p(t)
$$
with $N \geq n+1$ and
$$
A_i(\a,N)=\frac{1}{\Gamma(\a+i+1)}\left[1
+\sum_{p=n-i+1}^N\frac{\Gamma(p-\a-n)}{\Gamma(-\a-i)(p-n+i)!}\right],
$$
then the error is bounded by the expression
$$
\left| E_{tr}(t)\right|\leq L_n(t)\frac{e^{(\a+n)^2+\a+n}}{
\Gamma(\a+n+1)(\a+n)N^{\a+n}} \left(\ln\frac{t}{a}\right)^{\a+n}(t-a),
$$
where
$$
L_n(t)=\max_{\tau\in[a,t]}|x_{n,1}(\tau)|.
$$
\end{thm}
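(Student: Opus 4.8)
The plan is to prove the expansion in three passes and then extract the error term from the tail of a binomial series. \emph{First}, I would integrate by parts $n+1$ times to reach the intermediate identity
\[
\LHI x(t)=\sum_{i=0}^{n}\frac{1}{\Gamma(\a+i+1)}\left(\ln\frac{t}{a}\right)^{\a+i}x_{i,0}(a)
+\frac{1}{\Gamma(\a+n+1)}\int_a^t\left(\ln\frac{t}{\tau}\right)^{\a+n}x_{n,1}(\tau)\,d\tau,
\]
proved by induction on the number $k\le n+1$ of integrations, exactly as in the displayed $n=2$ computation. The inductive step uses the elementary identity $\left(\ln\frac{t}{\tau}\right)^{\a+k}=-\frac{\tau}{\a+k+1}\frac{d}{d\tau}\left(\ln\frac{t}{\tau}\right)^{\a+k+1}$, the vanishing of the boundary term at $\tau=t$, and two bookkeeping facts about the sequences: $x_{k+1,1}(\tau)=\frac{d}{d\tau}(\tau\,x_{k,1}(\tau))$ by definition, and $\tau\,x_{k,1}(\tau)=x_{k+1,0}(\tau)$, the latter itself established by a short induction. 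These are precisely what convert the boundary contribution into $\frac{1}{\Gamma(\a+k+2)}\left(\ln\frac{t}{a}\right)^{\a+k+1}x_{k+1,0}(a)$ and lower the integrand from $x_{k,1}$ to $x_{k+1,1}$, while the factor $1/(\a+k+1)$ advances $\Gamma(\a+k+1)$ to $\Gamma(\a+k+2)$.

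\emph{Second}, I would expand $\left(\ln\frac{t}{\tau}\right)^{\a+n}=\left(\ln\frac{t}{a}\right)^{\a+n}\sum_{p=0}^\infty\frac{\Gamma(p-\a-n)}{\Gamma(-\a-n)\,p!}\bigl(\ln\tfrac{\tau}{a}/\ln\tfrac{t}{a}\bigr)^p$, which converges on $[a,t]$ because $\a+n>0$, insert it into the integral, and then run an induction on the number $m$ of reduction steps whose invariant is
\begin{multline*}
\LHI x(t)=\sum_{i=n-m+1}^{n}A_i(\a)\left(\ln\frac{t}{a}\right)^{\a+i}x_{i,0}(t)
+\sum_{i=0}^{n-m}\frac{1}{\Gamma(\a+i+1)}\left(\ln\frac{t}{a}\right)^{\a+i}x_{i,0}(a)\\
+\frac{\left(\ln\frac{t}{a}\right)^{\a+n}}{\Gamma(\a+n-m+1)}
\sum_{p=m}^\infty\frac{\Gamma(p-\a-n)}{\Gamma(-\a-n+m)\,(p-m)!\,\left(\ln\frac{t}{a}\right)^p}
\int_a^t\left(\ln\frac{\tau}{a}\right)^{p-m}x_{n-m,1}(\tau)\,d\tau.
\end{multline*}
At step $m\mapsto m+1$ (with $n-m\ge1$) I split off the term $p=m$, whose series coefficient equals $1$, and integrate the remaining terms by parts once more: the $p=m$ term yields $x_{n-m,0}(t)-x_{n-m,0}(a)$, the second piece cancelling the surviving endpoint term at $a$ and the first supplying the leading $1$ in $A_{n-m}$; the boundary values coming from $p\ge m+1$ assemble the series part of $A_{n-m}$; and the new integrals, after using $(p-m)/(p-m)!=1/(p-m-1)!$, reproduce the invariant at level $m+1$.

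The main obstacle is the simultaneous bookkeeping of the two Gamma factors at each reduction, which must lower $\Gamma(\a+n-m+1)$ to $\Gamma(\a+n-m)$ while raising $\Gamma(-\a-n+m)$ to $\Gamma(-\a-n+m+1)$ and absorbing the minus sign produced by integration by parts; the identity that makes this work is $\frac{-1}{\Gamma(\a+n-m+1)\Gamma(-\a-n+m)}=\frac{1}{\Gamma(\a+n-m)\Gamma(-\a-n+m+1)}$, which is just $\Gamma(z+1)=z\Gamma(z)$ applied to both factors. The final step $m=n$ is genuinely different, since $x_{0,1}=\dot{x}$ carries no factor $\tau$: integrating by parts now leaves $\frac{x(\tau)}{\tau}$ in the integrand, producing exactly $V_p(t)$, while the prefactor collapses to $B(\a,p)=\frac{\Gamma(p-\a-n)}{\Gamma(\a)\Gamma(1-\a)(p-n)!}$ through $\Gamma(\a+1)=\a\Gamma(\a)$ and $\Gamma(1-\a)=-\a\Gamma(-\a)$. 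Reading off $i=n-m$ turns the accumulated coefficients into the stated $A_i(\a)$, and the absolute convergence of the series defining them follows, as noted for $n=2$, from identifying it with ${_1F_0}(-\a-i,1)-1$ and invoking \cite[Theorem~2.1.2]{Andrews} since $-\a-i<0$.

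\emph{Finally}, for the error I would truncate the inner binomial series at $p=N$ directly in the expanded exact formula; the finite reduction then produces precisely the approximation with coefficients $A_i(\a,N)$ and the truncated $V_p$-sum, so the discarded tail is
\[
E_{tr}(t)=\frac{\left(\ln\frac{t}{a}\right)^{\a+n}}{\Gamma(\a+n+1)}\int_a^t R_N(\tau)\,x_{n,1}(\tau)\,d\tau,\qquad
R_N(\tau)=\sum_{p=N+1}^\infty\frac{\Gamma(p-\a-n)}{\Gamma(-\a-n)\,p!}\left(\frac{\ln\frac{\tau}{a}}{\ln\frac{t}{a}}\right)^p.
\]
On $[a,t]$ one has $0\le\ln\frac{\tau}{a}\le\ln\frac{t}{a}$ and $\frac{\Gamma(p-\a-n)}{\Gamma(-\a-n)p!}=(-1)^p\binom{\a+n}{p}$, so $|R_N(\tau)|\le\sum_{p=N+1}^\infty\bigl|\binom{\a+n}{p}\bigr|$; applying the same estimate $\bigl|\binom{\a+n}{p}\bigr|\le e^{(\a+n)^2+\a+n}/p^{\a+n+1}$ used in the case $n=2$ and comparing with $\int_N^\infty p^{-(\a+n+1)}\,dp$ gives $|R_N(\tau)|\le e^{(\a+n)^2+\a+n}/\bigl((\a+n)N^{\a+n}\bigr)$. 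Bounding $|x_{n,1}(\tau)|$ by $L_n(t)$ and $\int_a^t d\tau$ by $t-a$ then yields the claimed estimate. The only delicate point is the interchange of summation and integration in $R_N$, justified by the uniform convergence of the binomial series on $[a,t]$.
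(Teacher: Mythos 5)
Your proposal is correct and takes essentially the same route as the paper's own proof: repeated integration by parts to the intermediate identity with boundary terms $x_{i,0}(a)$, the binomial expansion of $\left(\ln\frac{t}{\tau}\right)^{\a+n}$, the split-off-the-first-term-and-integrate-by-parts reduction (which the paper only sketches by reference to the $n=2$ case, and which your explicit induction invariant and Gamma-factor identity make rigorous), convergence of the $A_i$ series via ${_1F_0}(-\a-i,1)$, and the identical $R_N$ tail with the binomial-coefficient bound $e^{(\a+n)^2+\a+n}/\bigl((\a+n)N^{\a+n}\bigr)$ for the error. The only difference is one of detail, not of method: you supply the bookkeeping (e.g.\ $\tau\,x_{k,1}(\tau)=x_{k+1,0}(\tau)$ and the reduction invariant) that the paper leaves implicit.
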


\begin{proof}
Applying integration by parts repeatedly
and the binomial formula, we arrive to
$$
\begin{array}{ll}
\LHI x(t)&=\displaystyle\sum_{i=0}^{n}\frac{1}{\Gamma(\a+i+1)}
\left(\ln\frac{t}{a}\right)^{\a+i}x_{i,0}(a)\\
&\quad\displaystyle+\frac{1}{\Gamma(\a+n+1)}
\left(\ln\frac{t}{a}\right)^{\a+n}
\sum_{p=0}^\infty\frac{\Gamma(p-\a-n)}{\Gamma(-\a-n)p!\left(
\ln\frac{t}{a}\right)^p} \int_a^t \left(\ln\frac{\tau}{a}\right)^p x_{n,1}(\tau)d\tau.
\end{array}
$$
To achieve the expansion formula, we repeat the same procedure as for the case $n=2$:
we split the sum into two parts (the first term plus the remainings)
and integrate by parts the second one. The convergence of the series
$A_i(\a)$ is ensured by the relation
$$
\sum_{p=n-i+1}^\infty\frac{\Gamma(p-\a-n)}{\Gamma(-\a-i)(p-n+i)!}
={_1F_0} (-\a-i,1)-1.
$$
The error on the approximation is given by
$$
\left| E_{tr}(t)\right|=\left|\frac{1}{\Gamma(\a+n+1)}\left(
\ln\frac{t}{a}\right)^{\a+n} \int_a^t R_N(\tau)x_{n,1}(\tau)d\tau\right|
$$
with
$$
R_N(\tau)=\sum_{p=N+1}^\infty\frac{\Gamma(p-\a-n)}{\Gamma(-\a-n)p!}
\frac{\left(\ln\frac{\tau}{a}\right)^p}{\left(\ln\frac{t}{a}\right)^p}.
$$
Also, for $\tau\in[a,t]$,
$$
|R_N(\tau)| \leq \sum_{p=N+1}^\infty\left| \binom{\a+n}{p} \right|
\leq \frac{e^{(\a+n)^2+\a+n}}{(\a+n)N^{\a+n}}.
$$
\end{proof}

We remark that the error formula tends to zero as $N$ increases.
Similarly to what was done with the left fractional integral,
we can also expand the right Hadamard fractional integral.

\begin{thm}
\label{HRFI}
Let $n\in\mathbb N$, $0<a<b$ and $x:[a,b]\to\mathbb R$
be a function of class $C^{n+1}$. Then,
$$
\RHI x(t)=\sum_{i=0}^{n}A_i(\alpha)\left(\ln\frac{b}{t}\right)^{\a+i} x_{i,0}(t)
+\sum_{p=n+1}^\infty B(\a,p)\left(\ln\frac{b}{t}\right)^{\a+n-p}W_p(t)
$$
with
$$
\begin{array}{ll}
A_i(\a)&=\displaystyle\frac{(-1)^i}{\Gamma(\a+i+1)}\left[1
+\sum_{p=n-i+1}^\infty\frac{\Gamma(p-\a-n)}{\Gamma(-\a-i)(p-n+i)!}\right],\\
B(\a,p)&=\displaystyle\frac{\Gamma(p-\a-n)}{\Gamma(\a)\Gamma(1-\a)(p-n)!},\\
W_p(t)&=\displaystyle\int_t^b (p-n)\left(
\ln\frac{b}{\tau}\right)^{p-n-1}\frac{x(\tau)}{\tau}d\tau.
\end{array}
$$
\end{thm}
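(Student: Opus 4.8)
\emph{Proof proposal.} The plan is to follow, \emph{mutatis mutandis}, the proof of the left-integral theorem, tracking the two effects caused by reflecting the operator: the integration variable now runs over $[t,b]$ against the singular kernel $\left(\ln\frac{\tau}{t}\right)^{\a-1}$, and, since $\frac{d}{d\tau}\ln\frac{\tau}{t}=+\frac{1}{\tau}$ (as opposed to $\frac{d}{d\tau}\ln\frac{t}{\tau}=-\frac{1}{\tau}$ in the left case), the antiderivatives used at each integration by parts no longer absorb a minus sign. This is precisely what will make the signs alternate and produce the factor $(-1)^i$ in front of the $A_i(\a)$.

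First I would integrate by parts $n+1$ times, starting from the definition of $\RHI x(t)$ and using $\frac{d}{d\tau}\left(\ln\frac{\tau}{t}\right)^{\a+i}=(\a+i)\left(\ln\frac{\tau}{t}\right)^{\a+i-1}\frac{1}{\tau}$. At each step the boundary contribution at $\tau=t$ vanishes (the log-power is zero there) while the contribution at $\tau=b$ survives; recognising the emerging derivative combinations as the sequences $x_{i,0}$ and $x_{n,1}$ gives
$$
\RHI x(t)=\sum_{i=0}^{n}\frac{(-1)^i}{\Gamma(\a+i+1)}\left(\ln\frac{b}{t}\right)^{\a+i}x_{i,0}(b)
+\frac{(-1)^{n+1}}{\Gamma(\a+n+1)}\int_t^b\left(\ln\frac{\tau}{t}\right)^{\a+n}x_{n,1}(\tau)\,d\tau ,
$$
where the sign $(-1)^{n+1}$ on the remainder is carried along and absorbed below.

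Next I would expand the remainder kernel by the binomial formula, writing $\ln\frac{\tau}{t}=\ln\frac{b}{t}-\ln\frac{b}{\tau}$ and factoring out $\left(\ln\frac{b}{t}\right)^{\a+n}$; since $0\le \frac{\ln(b/\tau)}{\ln(b/t)}\le 1$ for $\tau\in[t,b]$ and $\a+n>0$, the series
$$
\left(\ln\frac{\tau}{t}\right)^{\a+n}=\left(\ln\frac{b}{t}\right)^{\a+n}\sum_{p=0}^{\infty}\frac{\Gamma(p-\a-n)}{\Gamma(-\a-n)\,p!}\,\frac{\left(\ln\frac{b}{\tau}\right)^{p}}{\left(\ln\frac{b}{t}\right)^{p}}
$$
converges. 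Substituting and then iterating the split-and-integrate-by-parts step used in the case $n=2$ (peel off the leading index of the $p$-series and integrate the tail by parts), the boundary values $x_{i,0}(b)$ get converted into $x_{i,0}(t)$ and the accumulated $p$-contributions assemble exactly the correction series inside the brackets of $A_i(\a)$, whose absolute convergence follows from $\sum_{p=n-i+1}^{\infty}\frac{\Gamma(p-\a-n)}{\Gamma(-\a-i)(p-n+i)!}={}_1F_0(-\a-i,1)-1$ together with \cite[Theorem~2.1.2]{Andrews}; the indices $p\ge n+1$ survive as the integrals $W_p(t)$ weighted by $B(\a,p)$.

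The main obstacle I anticipate is the sign and Gamma-function bookkeeping in this concluding reduction. The remainder carries the factor $(-1)^{n+1}$, and the final integrations by parts that turn $x_{n,1}$ back into $x$ (differentiating the powers $\left(\ln\frac{b}{\tau}\right)^p$ and evaluating the surviving boundary at $\tau=t$) generate further signs together with the factorial shift $p!\mapsto(p-n)!$ and the weight $(p-n)$ recorded in $W_p(t)$. One must check that these recombine, through the identity $\Gamma(\a+n+1)\Gamma(-\a-n)=(-1)^{n+1}\Gamma(\a)\Gamma(1-\a)$, into the single \emph{positive} coefficient $B(\a,p)=\frac{\Gamma(p-\a-n)}{\Gamma(\a)\Gamma(1-\a)(p-n)!}$ of the statement; the $(-1)^{n+1}$ in that identity is exactly what absorbs the stray sign on the remainder. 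Once this cancellation is verified and the factor $(-1)^i$ is seen to attach cleanly to each $A_i(\a)$, the expansion follows as claimed.
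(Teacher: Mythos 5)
Your proposal is correct and takes essentially the same route as the paper, which in fact gives no separate proof of Theorem~\ref{HRFI} but simply remarks that it follows ``similarly to what was done with the left fractional integral'' --- precisely the \emph{mutatis mutandis} adaptation you carry out. Your sign bookkeeping checks out: the boundary terms at $\tau=b$ do alternate to give the $(-1)^i$ in $A_i(\a)$, the subsequent split-and-integrate-by-parts steps leave the remainder's coefficient unchanged (the differentiated log-power now enters with a plus sign), and the stray $(-1)^{n+1}$ is indeed absorbed by the reflection identity $\Gamma(\a+n+1)\Gamma(-\a-n)=(-1)^{n+1}\Gamma(\a)\Gamma(1-\a)$, yielding the positive coefficient $B(\a,p)$ of the statement.
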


\begin{remark}
Analogously to what was done for the left fractional integral,
one can consider an approximation for the right fractional integral
by considering finite sums in the expansion obtained in Theorem~\ref{HRFI}.
In this case, the error is bounded by
$$
\left| E_{tr}(t)\right|\leq L_n(t)\frac{e^{(\a+n)^2
+\a+n}}{\Gamma(\a+n+1)(\a+n)N^{\a+n}}
\left(\ln\frac{b}{t}\right)^{\a+n}(b-t),
$$
where
$$
L_n(t)=\max_{\tau\in[t,b]}|x_{n,1}(\tau)|.
$$
\end{remark}


\section{An expansion formula for the Hadamard fractional derivative}
\label{secDecomDeriv}

Starting with formulas \eqref{LHDformula} and \eqref{RHDformula},
and applying similar techniques as presented in Section~\ref{secDecomInt},
we are able to present expansion formulas, and respective approximation
formulas with an error estimation, for the left and right
Hadamard fractional derivatives. Due to restrictions on the number of pages,
we omit the details here and just exhibit the results.

Given $n\in\mathbb N$ and $x\in C^{n+1}[a,b]$, we have
$$
\begin{array}{ll}
\LHD x(t)& =\displaystyle \frac{1}{\Gamma(1-\a)}\left(\ln\frac{t}{a}\right)^{-\a}x(t)
+\sum_{i=1}^{n}A_i(\alpha)\left(\ln\frac{t}{a}\right)^{i-\a} x_{i,0}(t)\\
&\quad\displaystyle +\sum_{p=n+1}^\infty \left[
B(\a,p)\left(\ln\frac{t}{a}\right)^{n-\a-p}V_p(t)
+\frac{\Gamma(p+\a-n)}{\Gamma(1-\a)\Gamma(\a)(p-n)!}\left(
\ln\frac{t}{a}\right)^{-\a}x(t)\right]
\end{array}
$$
with
$$
\begin{array}{ll}
A_i(\a)&=\displaystyle\frac{1}{\Gamma(i+1-\a)}\left[1
+\sum_{p=n-i+1}^\infty\frac{\Gamma(p+\a-n)}{\Gamma(\a-i)(p-n+i)!}\right],
\quad i \in \{1,\ldots,n\}, \\
B(\a,p)&=\displaystyle\frac{\Gamma(p+\a-n)}{\Gamma(-\a)\Gamma(1+\a)(p-n)!},
\quad p \in \{n+1,\ldots\}, \\
V_p(t)&=\int_a^t (p-n)\left(\ln\frac{\tau}{a}\right)^{p-n-1}\frac{x(\tau)}{\tau}d\tau,
\quad p \in \{n+1,\ldots\}.
\end{array}
$$
When we consider finite sums,
$$
\LHD x(t)\approx \sum_{i=0}^{n}A_i(\alpha,N)\left(\ln\frac{t}{a}\right)^{i-\a}
x_{i,0}(t)+\sum_{p=n+1}^N B(\a,p)\left(\ln\frac{t}{a}\right)^{n-\a-p}V_p(t)
$$
with
$$
A_i(\a,N)=\displaystyle\frac{1}{\Gamma(i+1-\a)}\left[1
+\sum_{p=n-i+1}^N\frac{\Gamma(p+\a-n)}{\Gamma(\a-i)(p-n+i)!}\right],
\quad i \in\{0,\ldots,n\},
$$
and the error is bounded by
$$
\left| E_{tr}(t)\right|\leq L_n(t)\frac{e^{(n-\a)^2
+n-\a}}{\Gamma(n+1-\a)(n-\a)N^{n-\a}}
\left(\ln\frac{t}{a}\right)^{n-\a}(t-a),
$$
where
$$
L_n(t)=\max_{\tau\in[a,t]}|x_{n,1}(\tau)|.
$$

\begin{remark}
The series involved in the definition of $A_i$ are convergent,
for all $i \in \{1,\ldots,n\}$. This is due to the fact that
$$
\sum_{p=n-i+1}^\infty\frac{\Gamma(p+\a-n)}{\Gamma(\a-i)(p-n+i)!}
={_1F_0}(\a-i,1)-1
$$
and ${_1F_0}(\a-i,1)$ converges
(\cite[Theorem~2.1.1]{Andrews}), since $\a-i<0$.
\end{remark}

\begin{remark}
For the right Hadamard fractional derivative, the expansion reads as
$$
\begin{array}{ll}
\RHD x(t)& =\displaystyle \frac{1}{\Gamma(1-\a)}\left(\ln\frac{b}{t}\right)^{-\a}x(t)
+\sum_{i=1}^{n}A_i(\alpha)\left(\ln\frac{b}{t}\right)^{i-\a} x_{i,0}(t)\\
&\quad\displaystyle +\sum_{p=n+1}^\infty\left[
B(\a,p)\left(\ln\frac{b}{t}\right)^{n-\a-p}W_p(t)
+\frac{\Gamma(p+\a-n)}{\Gamma(1-\a)\Gamma(\a)(p-n)!}\left(
\ln\frac{b}{t}\right)^{-\a}x(t)\right]
\end{array}
$$
with
$$
\begin{array}{ll}
A_i(\a)&=\displaystyle\frac{(-1)^i}{\Gamma(i+1-\a)}\left[1
+\sum_{p=n-i+1}^\infty\frac{\Gamma(p+\a-n)}{\Gamma(\a-i)(p-n+i)!}\right],
\quad i \in \{1,\ldots,n\}, \\
B(\a,p)&=\displaystyle\frac{\Gamma(p+\a-n)}{\Gamma(-\a)\Gamma(1+\a)(p-n)!},
\quad p \in \{n+1,\ldots\},\\
W_p(t)&=\displaystyle\int_t^b (p-n)\left(
\ln\frac{b}{\tau}\right)^{p-n-1}\frac{x(\tau)}{\tau}d\tau.
\end{array}
$$
\end{remark}


\section{Examples}
\label{secExamples}

We obtained approximation formulas for the Hadamard fractional integrals
and derivatives, and an upper bound for the error on such decompositions.
In this section we study several cases, comparing the solution with the approximations.
To gather more information on the accuracy, we evaluate the error using the distance
$$
\mbox{dist}=\sqrt{\int_a^b (X(t)-\tilde{X}(t))^2dt},
$$
where $X(t)$ is the exact formula and $\tilde{X}(t)$ the approximation.
To begin with, we consider $\a=0.5$ and functions $x_1(t)=\ln t$
and $x_2(t)=1$ with $t\in[1,10]$. Then,
$$
\LHIHz x_1(t)=\frac{\sqrt{\ln^3 t}}{\Gamma(2.5)}
\mbox{ and } \LHIHz x_2(t)
=\frac{\sqrt{\ln t}}{\Gamma(1.5)}
$$
(\textrm{cf.} \cite[Property~2.24]{Kilbas}).
We consider the expansion formula for $n=2$
as in \eqref{Case:n=3} for both cases.
We obtain then the approximations
$$
\LHIHz x_1(t)\approx \left[ A_0(0.5,N)+A_1(0.5,N)
+\sum_{p=3}^N B(0.5,p)\frac{p-2}{p-1}\right]\sqrt{\ln^3 t}
$$
and
$$
\LHIHz x_2(t)\approx \left[ A_0(0.5,N)
+\sum_{p=3}^N B(0.5,p)\right]\sqrt{\ln t}.
$$
The results are exemplified in Figures~\ref{ExpLnt}
and \ref{Exp1}. As can be seen, the value $N=3$ is enough
in order to obtain a good accuracy in the sense of the error function.

\begin{figure}[ht!]
\begin{center}
\subfigure[$\LHIHz(\ln t)$]{\label{ExpLnt}\includegraphics[scale=0.5]{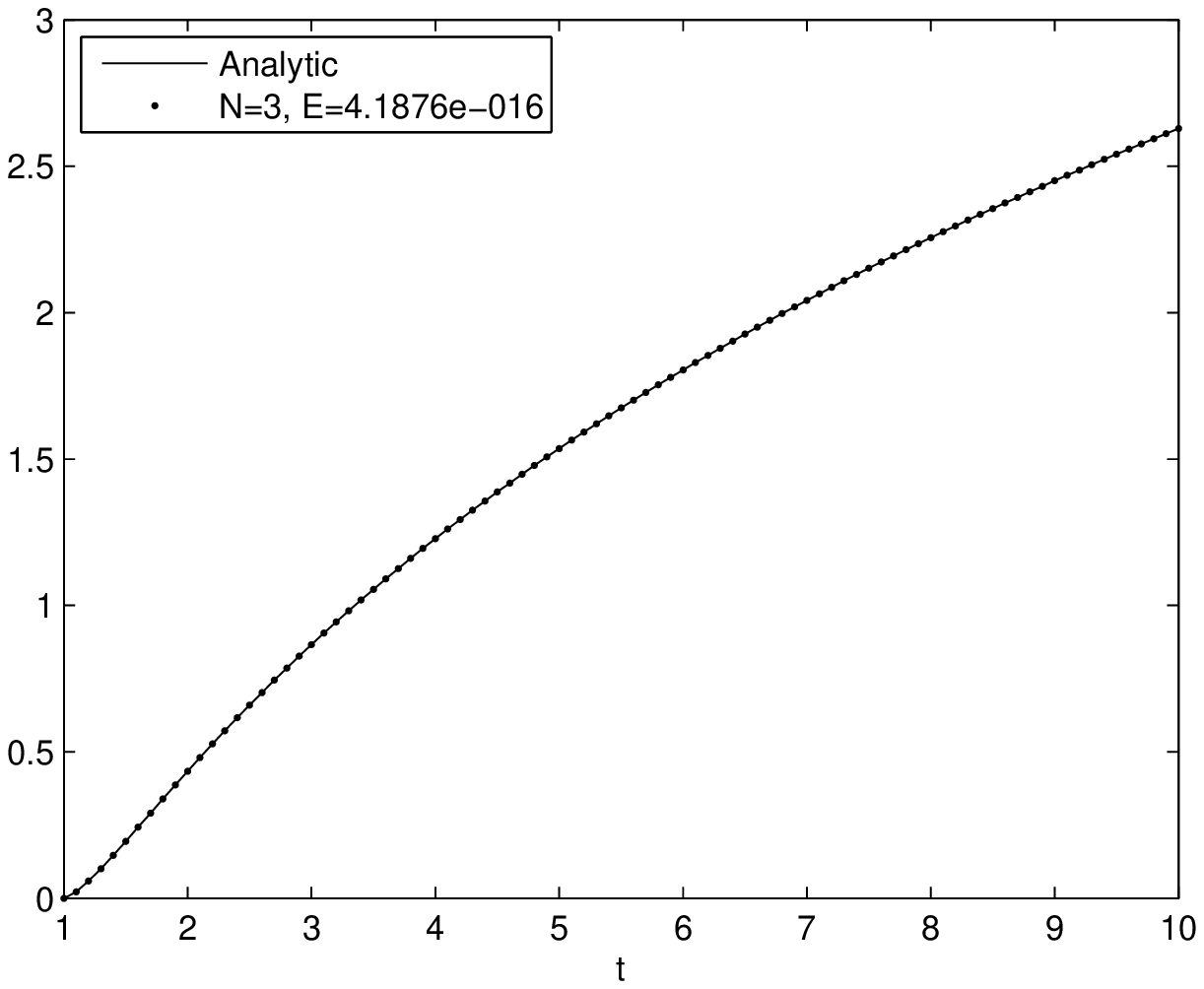}}
\subfigure[$\LHIHz(1)$]{\label{Exp1}\includegraphics[scale=0.5]{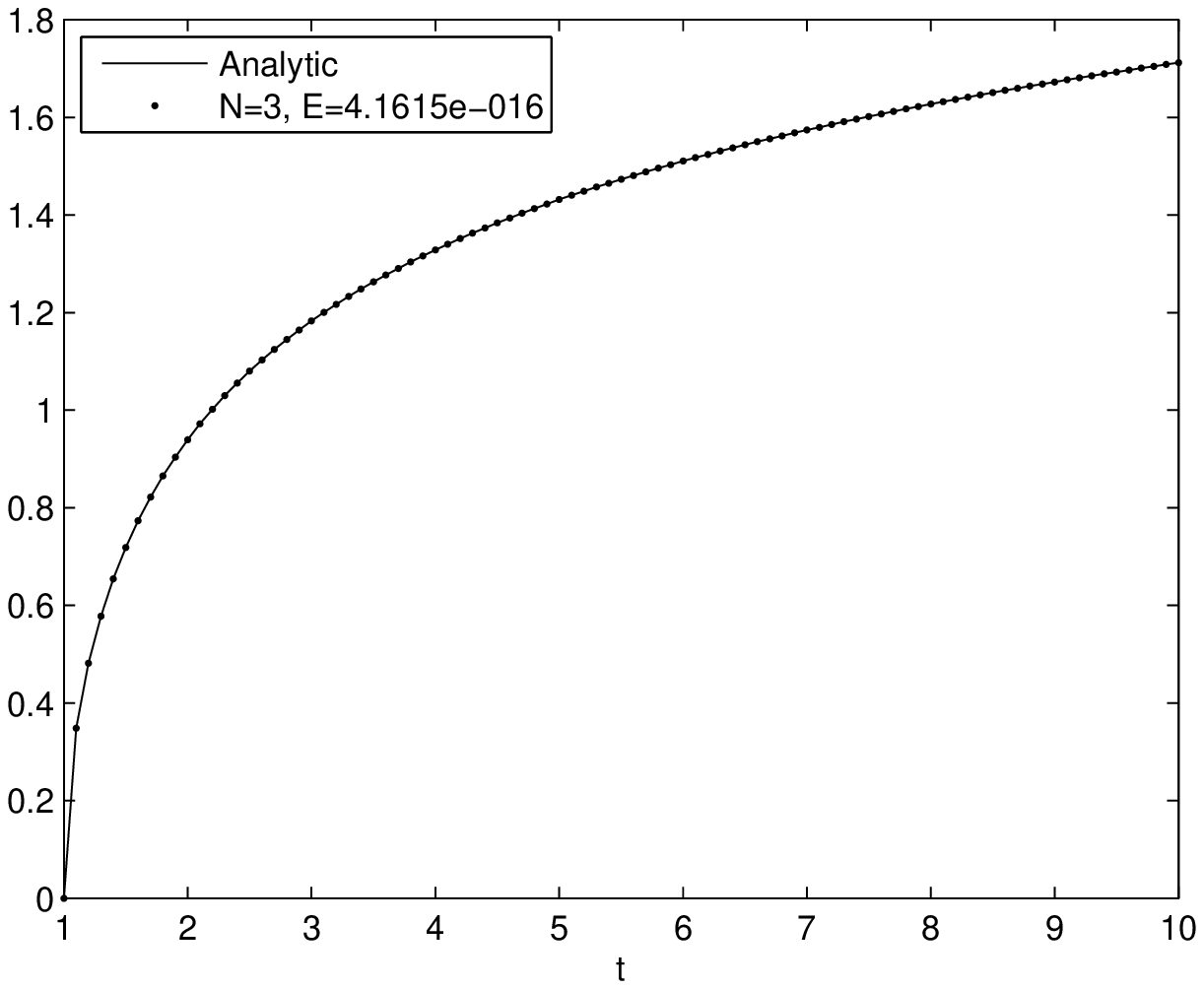}}
\end{center}
\caption{Analytic vs. numerical approximation for $n=2$.}
\end{figure}

We now test the approximation on the power functions $x_3(t)=t^4$
and $x_4(t)=t^9$, with $t\in[1,2]$. Observe first that
$$
\LHIHz(t^k)=\frac{1}{\Gamma(0.5)}\int_1^t
\left(\ln\frac{t}{\tau}\right)^{-0.5}\tau^{k-1}d\tau
=\frac{t^k}{\Gamma(0.5)}\int_0^{\ln t} \xi^{-0.5}e^{-\xi k} d\xi
$$
by the change of variables $\xi=\ln\frac{t}{\tau}$. In our cases,
$$
\LHIHz(t^4)\approx \frac{0.8862269255}{\Gamma(0.5)}t^4 \mbox{erf}(2\sqrt{\ln t})
\mbox { and } \LHIHz(t^9)\approx \frac{0.5908179503}{\Gamma(0.5)}t^9
\mbox{erf}(3\sqrt{\ln t}),
$$
where $\mbox{erf}(\cdot)$ is the error function. In Figures~\ref{Expt4}
and \ref{Expt9} we show approximations for several values of $N$.
We mention that, as $N$ increases, the error decreases
and thus we obtain a better approximation.

\begin{figure}[ht!]
\begin{center}
\subfigure[$\LHIHz(t^4)$]{\label{Expt4}\includegraphics[scale=0.5]{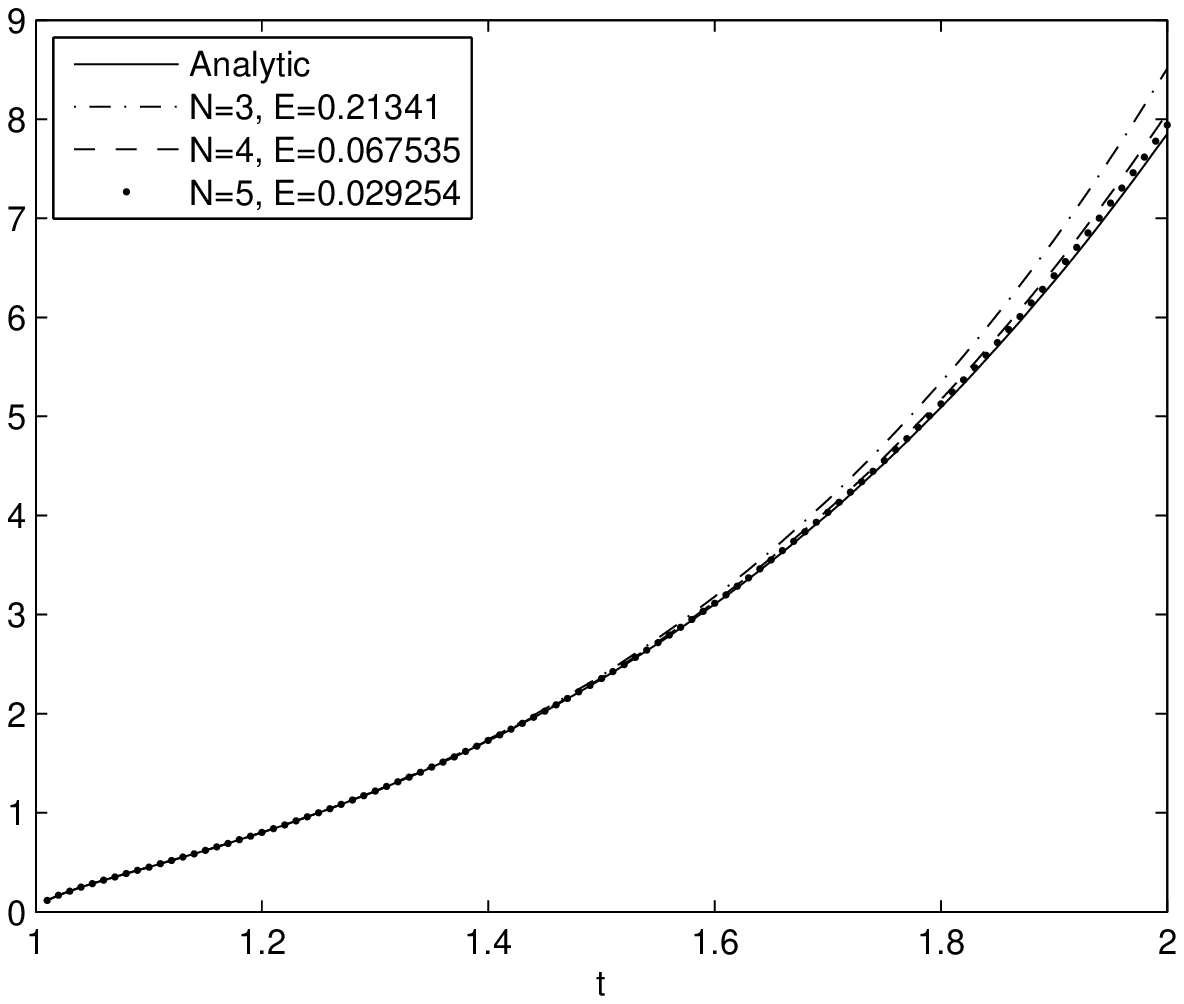}}
\subfigure[$\LHIHz(t^9)$]{\label{Expt9}\includegraphics[scale=0.5]{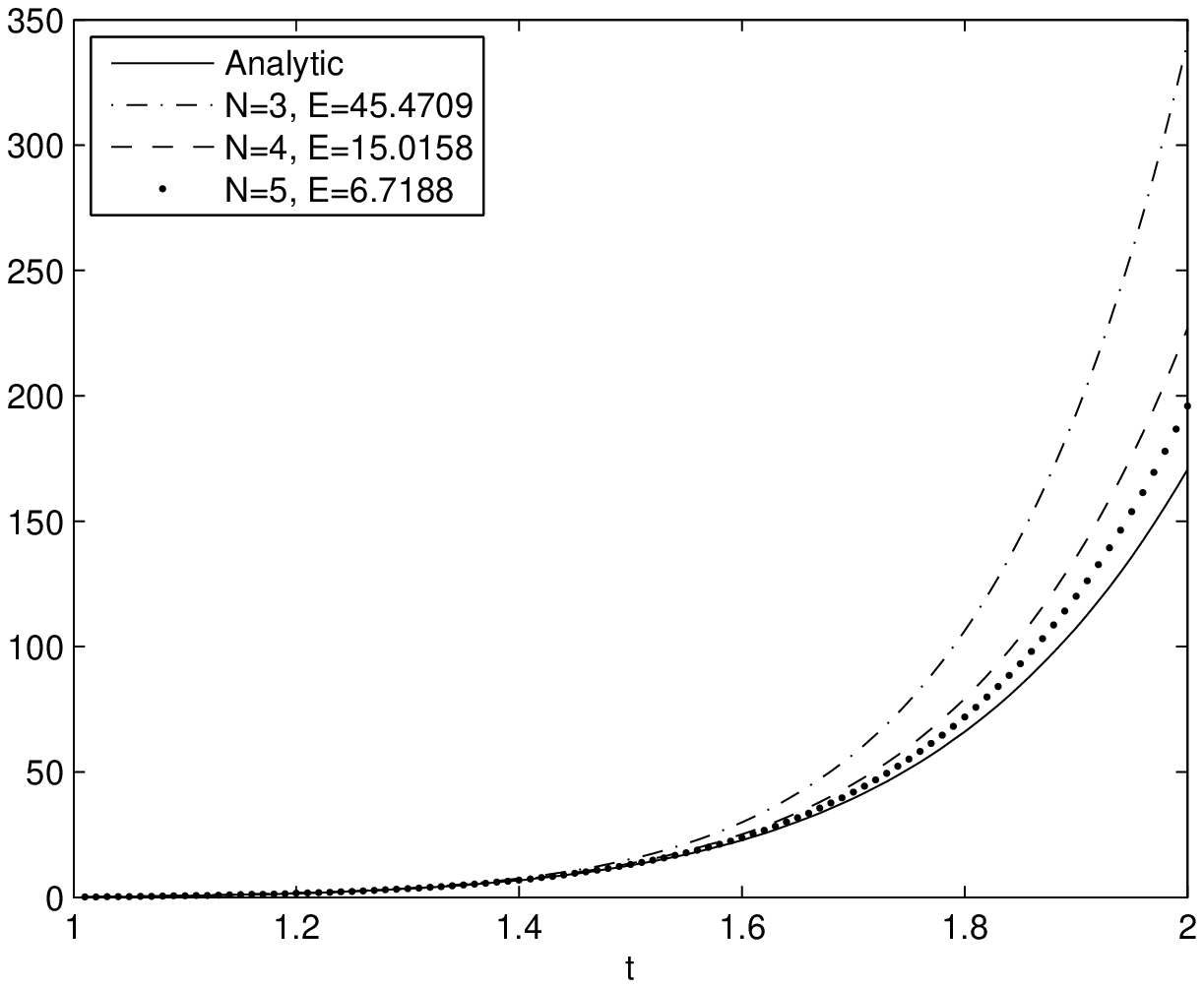}}
\end{center}
\caption{Analytic vs. numerical approximation for $n=2$.}
\end{figure}

Another way to obtain different expansion formulas is to vary $n$.
To exemplify, we choose the previous test functions $x_i$, for $i=1,2,3,4$,
and consider the cases $n=2,3,4$ with $N=5$ fixed. The results are shown
in Figures~\ref{ExpLntNfixed}, \ref{Exp1Nfixed}, \ref{Expt4Nfixed}
and \ref{Expt9Nfixed}. Observe that as $n$ increases, the error may increase.
This can be easily explained by analysis of the error formula, and the values
of the sequence $x_{(k,0)}$ involved. For example, for $x_4$ we have
$x_{(k,0)}(t)=9^kt^9$, for $k=0\ldots,n$. This suggests that, when we increase
the value of $n$ and the function grows fast, in order to obtain a better
accuracy on the method, the value of $N$ should also increase.
\begin{figure}[ht!]
\begin{center}
\subfigure[$\LHIHz(\ln t)$]{\label{ExpLntNfixed}\includegraphics[scale=0.5]{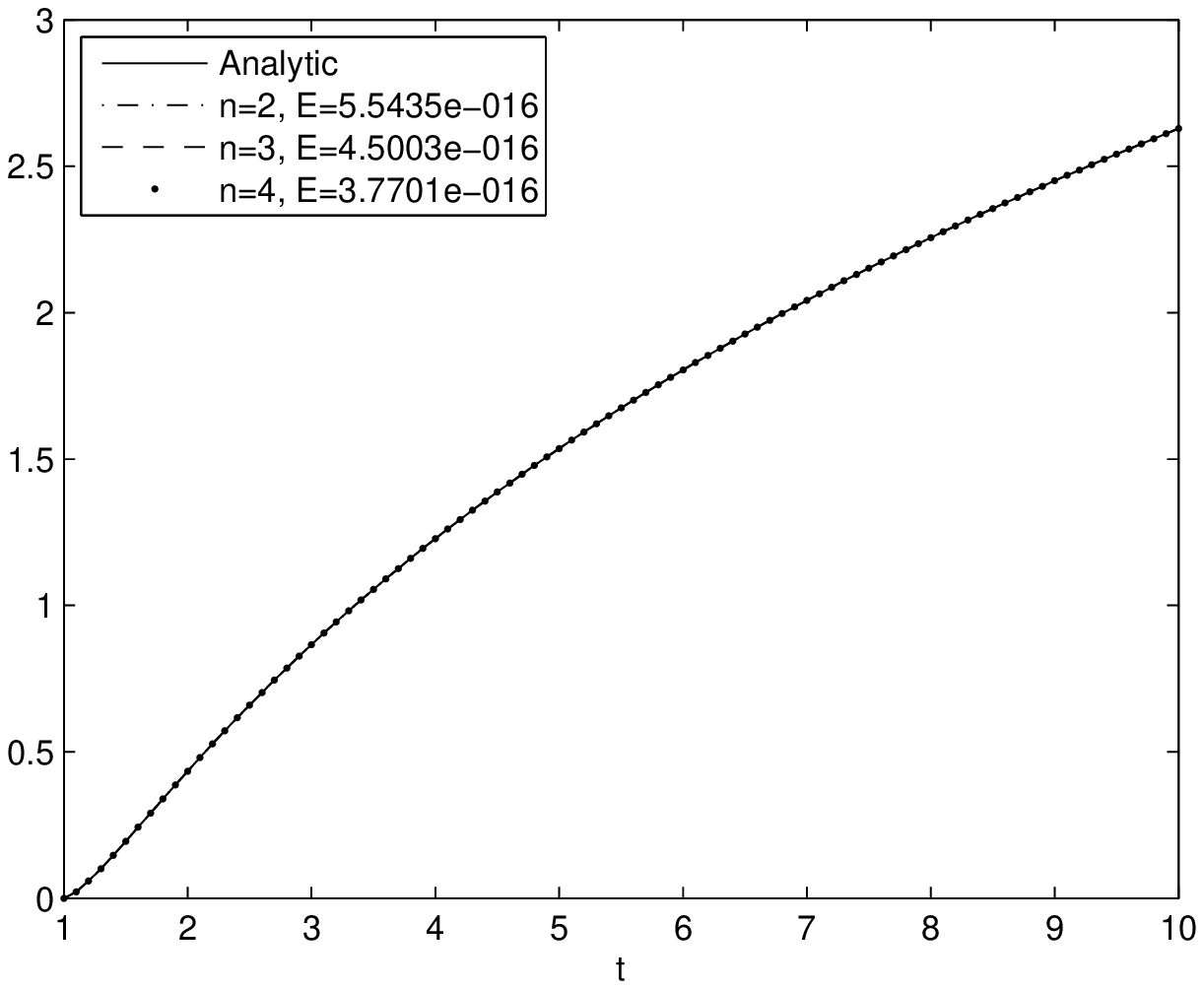}}
\subfigure[$\LHIHz(1)$]{\label{Exp1Nfixed}\includegraphics[scale=0.5]{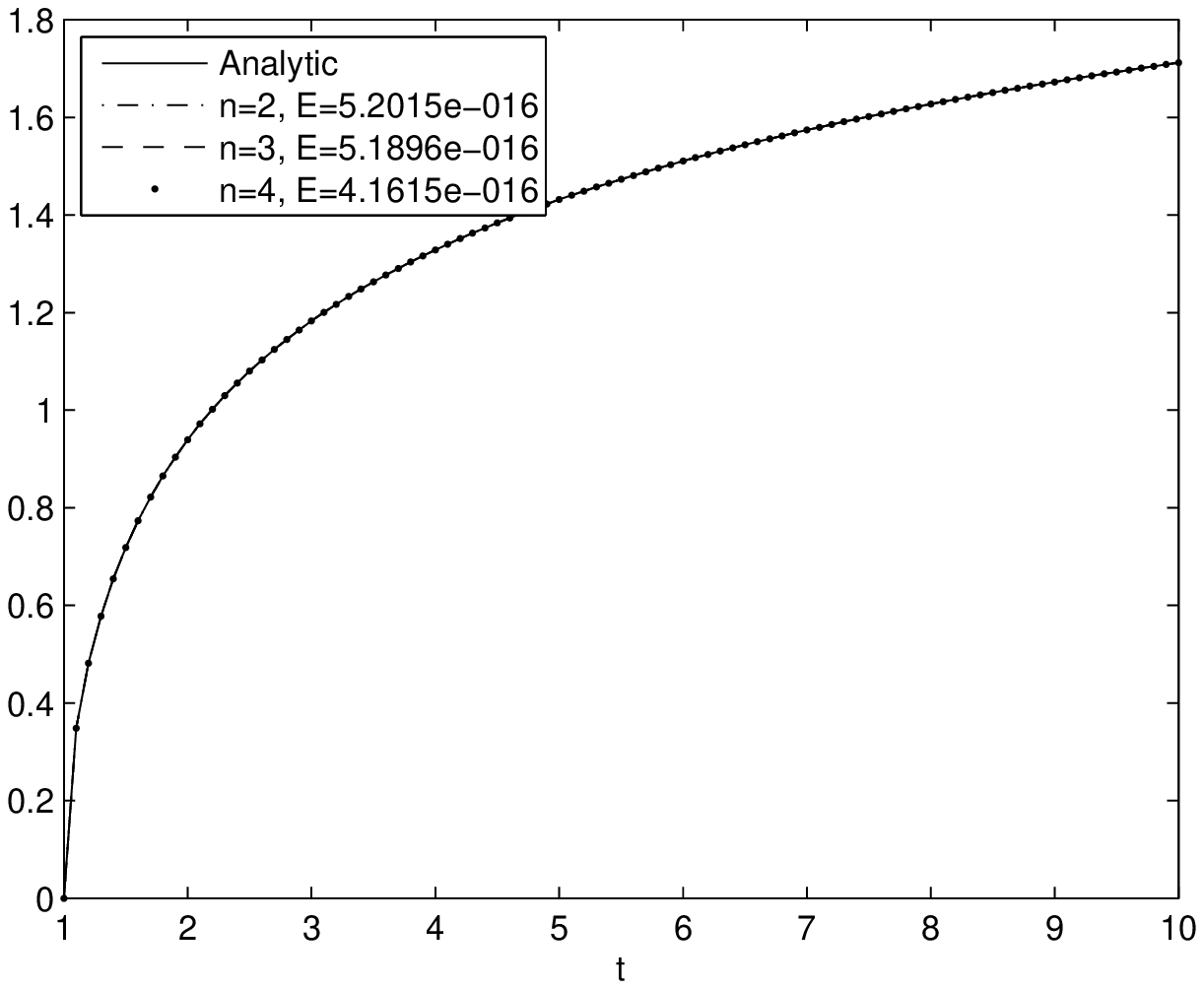}}
\subfigure[$\LHIHz(t^4)$]{\label{Expt4Nfixed}\includegraphics[scale=0.5]{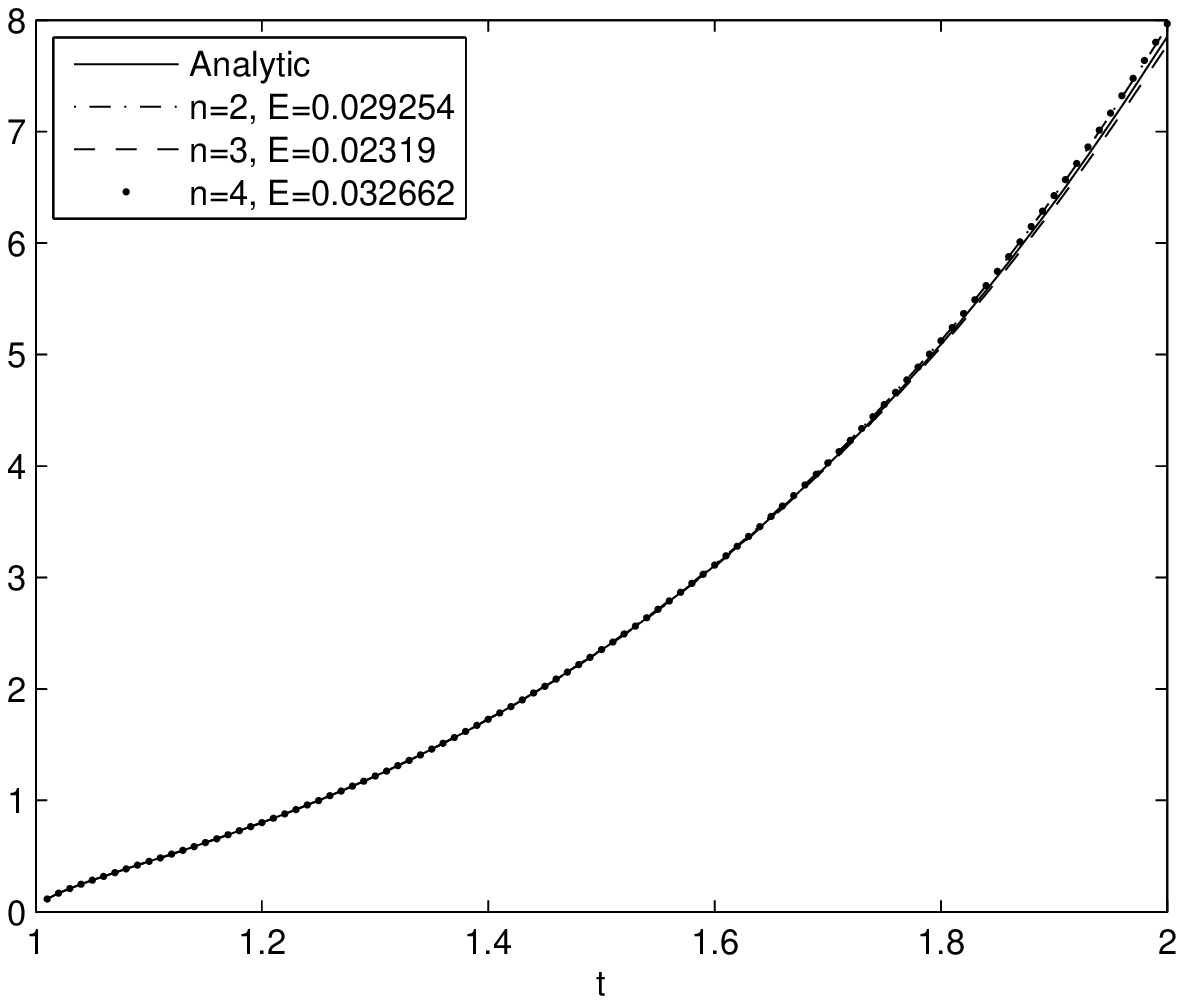}}
\subfigure[$\LHIHz(t^9)$]{\label{Expt9Nfixed}\includegraphics[scale=0.5]{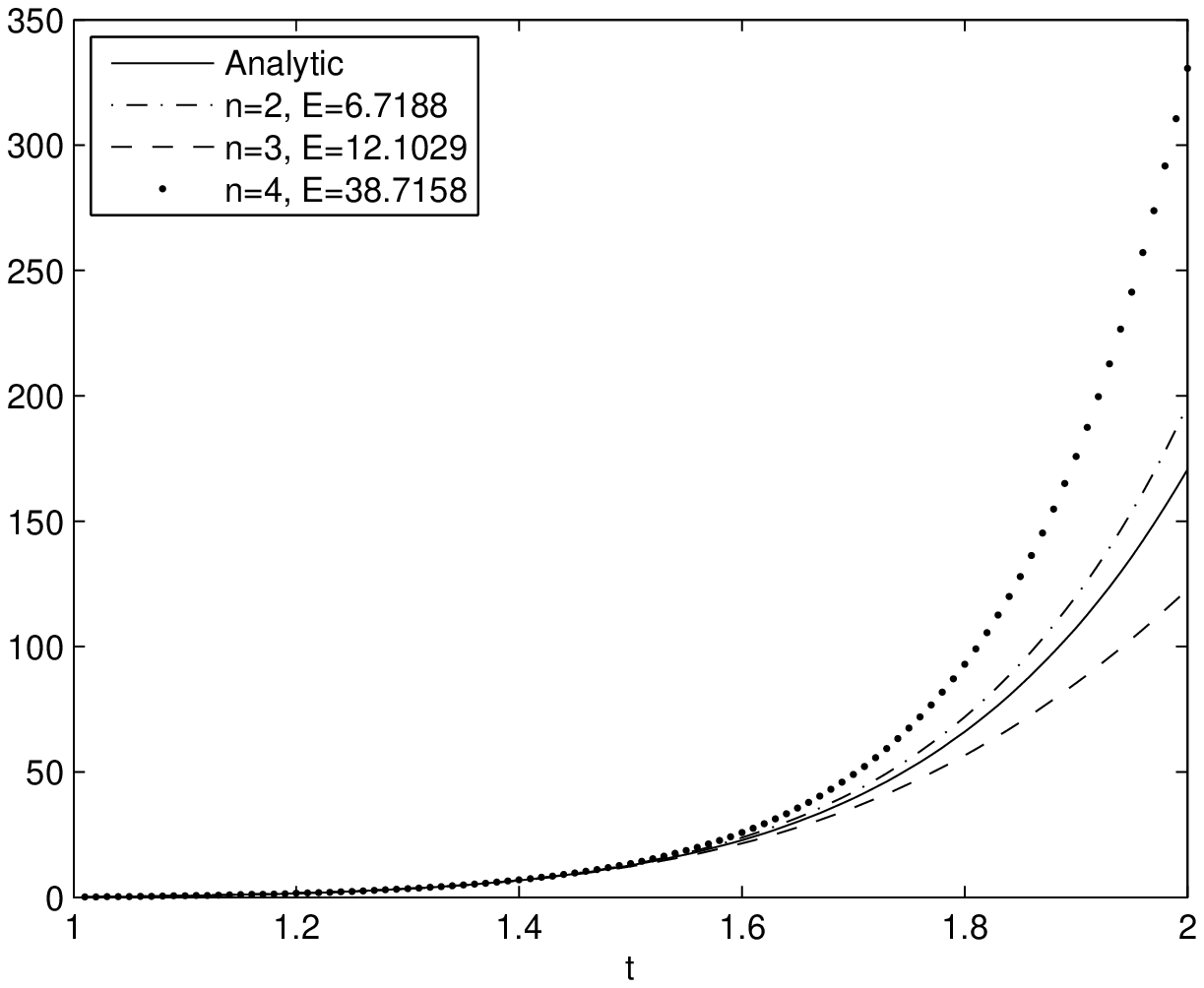}}
\end{center}
\caption{Analytic vs. numerical approximation for $n=2,3,4$ and $N=5$.}
\end{figure}

We now proceed with some examples for the Hadamard fractional derivatives.
The test functions are the same as before, and in Figures~\ref{ExpDerLnt},
\ref{ExpDer1}, \ref{ExpDert4} and \ref{ExpDert9} we exemplify the results.
In this case,
$$
\begin{array}{ll}
\LHDHz x_1(t)&=\displaystyle\frac{\sqrt{\ln t}}{\Gamma(1.5)},\\
\LHDHz x_2(t)&=\displaystyle\frac{1}{\Gamma(0.5)\sqrt{\ln t}},\\
\LHDHz x_3(t)&\displaystyle\approx \frac{1}{\Gamma(0.5)\sqrt{\ln t}}
+\frac{0.8862269255}{\Gamma(0.5)}4t^4 \mbox{erf}(2\sqrt{\ln t}),\\
\LHDHz x_4(t)&\displaystyle\approx \frac{1}{\Gamma(0.5)\sqrt{\ln t}}
+\frac{0.5908179503}{\Gamma(0.5)}9t^9 \mbox{erf}(3\sqrt{\ln t}).
\end{array}
$$

\begin{figure}[ht!]
\begin{center}
\subfigure[$\LHDHz(\ln t)$]{\label{ExpDerLnt}\includegraphics[scale=0.5]{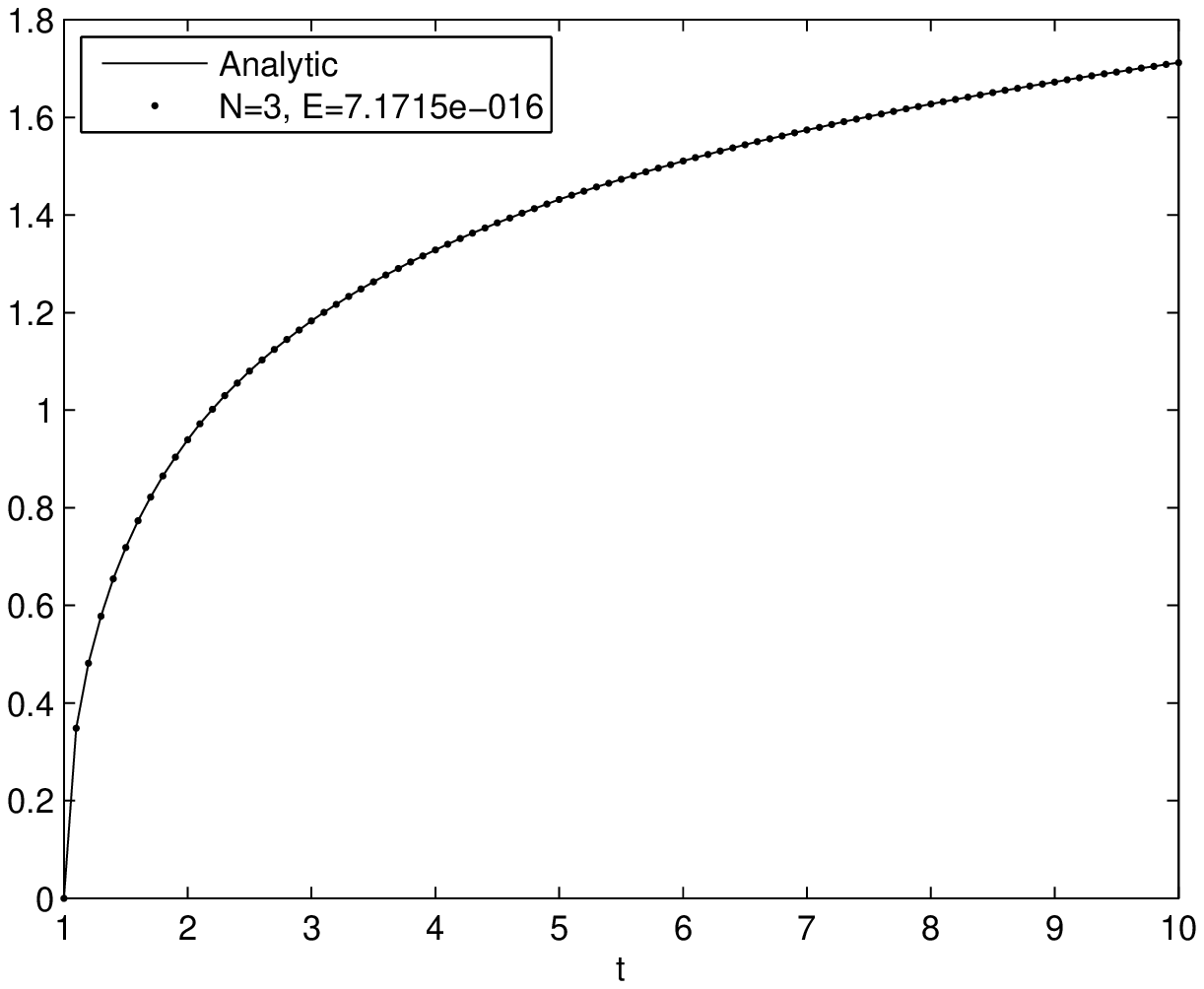}}
\subfigure[$\LHDHz(1)$]{\label{ExpDer1}\includegraphics[scale=0.5]{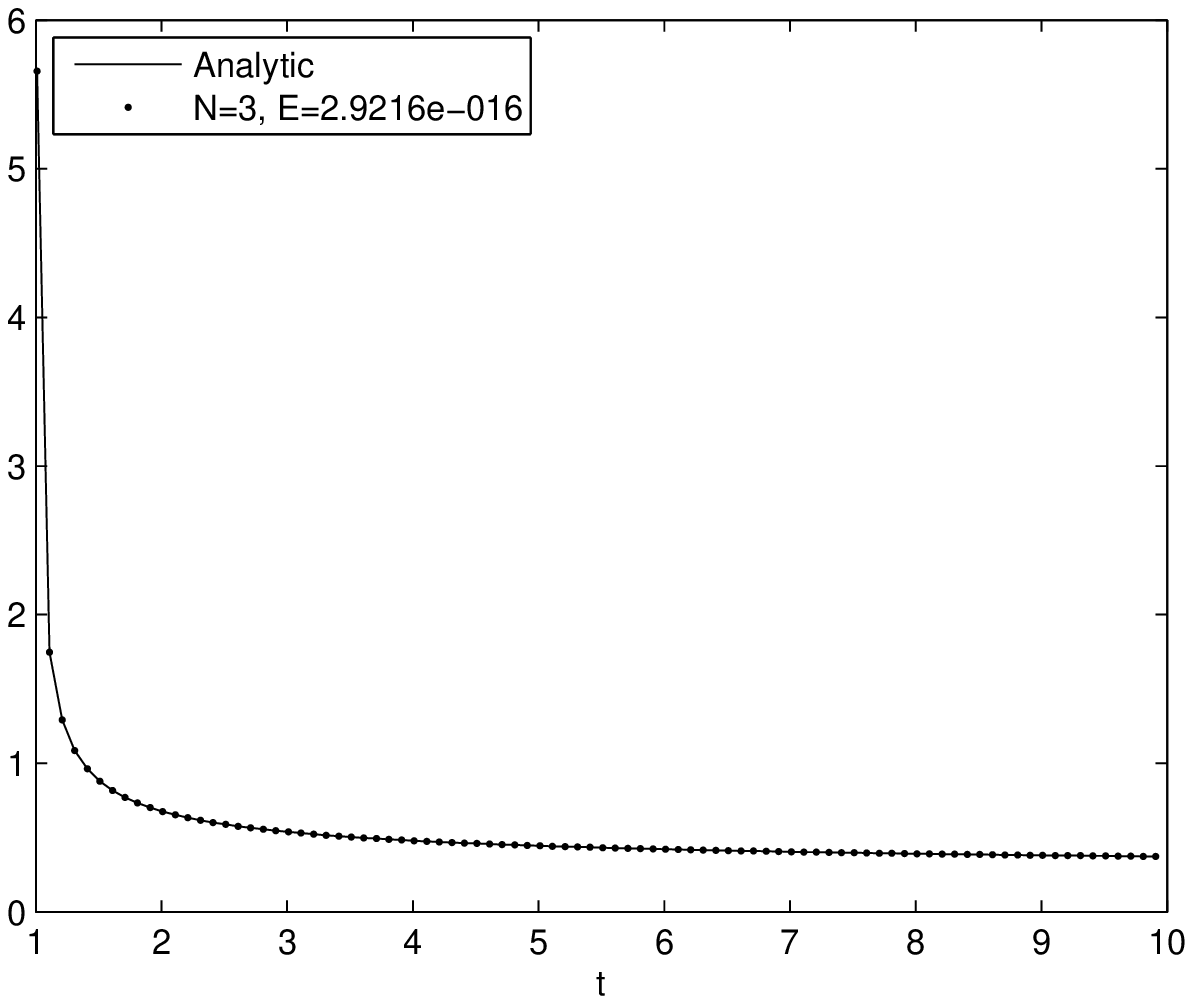}}
\subfigure[$\LHDHz(t^4)$]{\label{ExpDert4}\includegraphics[scale=0.5]{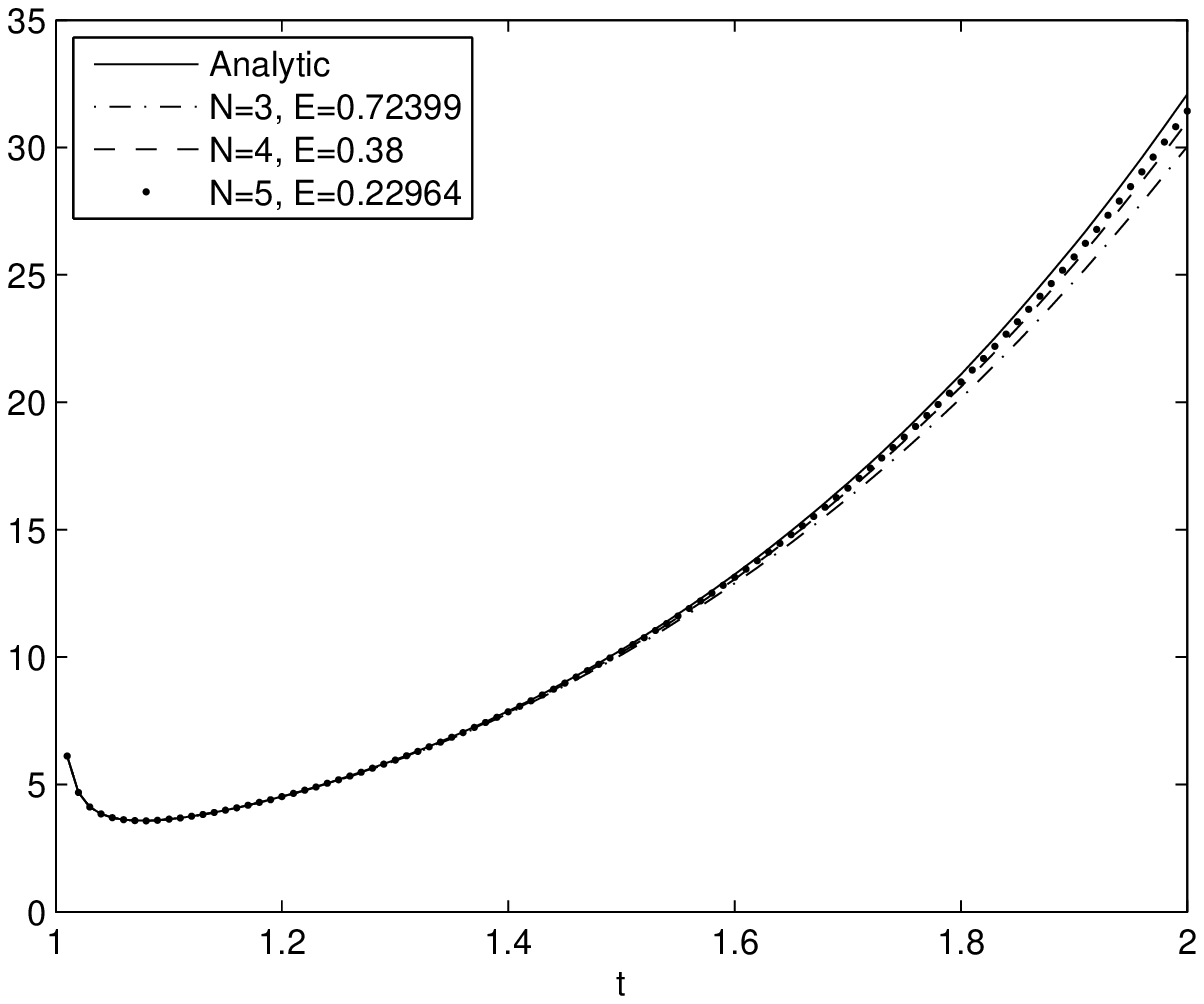}}
\subfigure[$\LHDHz(t^9)$]{\label{ExpDert9}\includegraphics[scale=0.5]{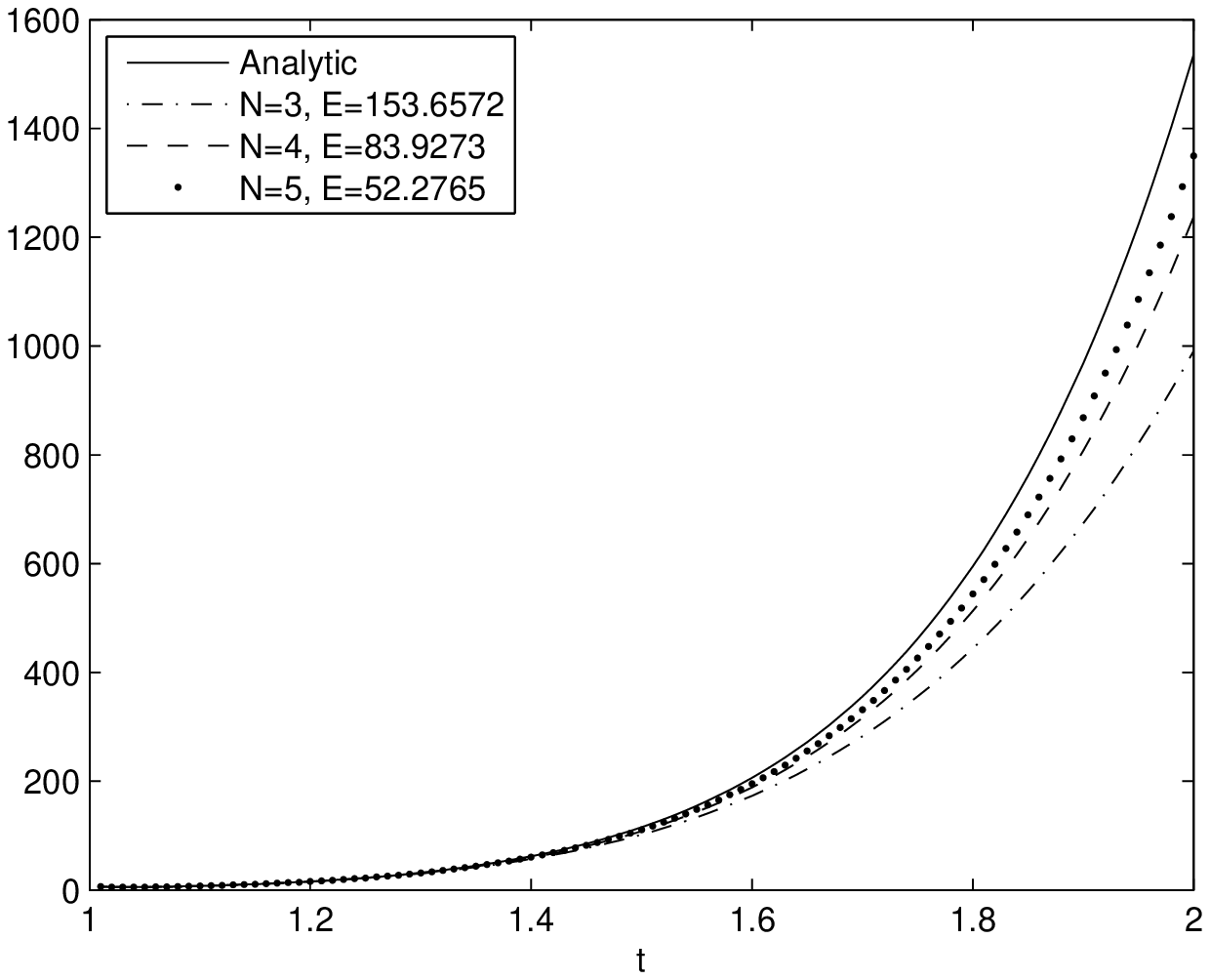}}
\end{center}
\caption{Analytic vs. numerical approximation for $n=2$.}
\end{figure}

One main advantage of this method is that we can replace fractional integrals
and fractional derivatives as a sum of integer/classical derivatives, and by doing
this we are rewriting the original problem, that falls in the theory
of fractional calculus, into a new one where we can apply the already
known techniques (analytical or numerical) and thus solving it. For example,
when in presence of a fractional integral or a fractional derivative,
with a number of initial conditions, replace the fractional operator by the
appropriate approximation, with the value of $n$
given by the number of initial conditions.

For example, consider the problem
\begin{equation}
\label{FDE1}
\left\{
\begin{array}{l}
\displaystyle\LHDHz x(t)+x(t)
=\frac{\sqrt{x(t)}}{\Gamma(1.5)}+\ln t\\
x(1)=0.
\end{array}
\right.
\end{equation}
Obviously, $x(t)=\ln t$ is a solution for \eqref{FDE1}.
Since we have only one initial condition, we replace the operator
$\LHDHz (\cdot)$ by the expansion with $n=1$ and thus obtaining
\begin{equation}
\label{eq:na:exp}
\left\{
\begin{array}{l}
\displaystyle\left[1+A_0(0.5,N)(\ln t)^{-0.5}\right]x(t)
+A_1(0.5,N)(\ln t)^{0.5}t \dot{x}(t)
+\sum_{p=2}^NB(0.5,p)(\ln t)^{0.5-p}V_p(t)
=\frac{\sqrt{x(t)}}{\Gamma(1.5)}+\ln t,\\
\displaystyle\dot{V_p}(t)=(p-1)(\ln t)^{p-2}\frac{x(t)}{t},
\quad p=2,3,\ldots,N,\\
x(1)=0,\\
V_p(1)=0, \quad p=2,3, \ldots, N.
\end{array}\right.
\end{equation}

In Figure~\ref{FDEn=1} we compare the analytical solution
of the FDE \eqref{FDE1} with the numerical result
for $N=2$ in \eqref{eq:na:exp}.
\begin{figure}
\begin{center}
\includegraphics[width=10cm]{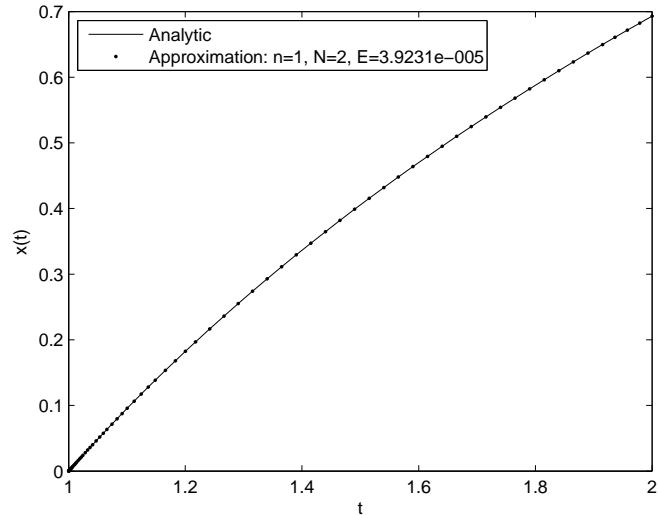}\\
\caption{Analytic vs. numerical approximation
for the FDE \eqref{FDE1} with one initial condition.}\label{FDEn=1}
\end{center}
\end{figure}


\section*{Acknowledgments}

Work supported by {\it FEDER} funds through {\it COMPETE}
--- Operational Programme Factors of Competitiveness
(``Programa Operacional Factores de Competitividade'') ---
and by Portuguese funds through the {\it Center for Research
and Development in Mathematics and Applications} (University of Aveiro)
and the Portuguese Foundation for Science and Technology
(``FCT -- Funda\c{c}\~{a}o para a Ci\^{e}ncia e a Tecnologia''),
within project PEst-C/MAT/UI4106/2011
with COMPETE number FCOMP-01-0124-FEDER-022690.
Shakoor Pooseh was also supported by the
Ph.D. fellowship SFRH/BD/33761/2009.
The authors are grateful to three referees
for their constructive and helpful comments
and valuable suggestions.



\end{document}